\documentclass[twoside,12pt]{article}
\usepackage{amsmath, amsthm, amscd, amsfonts, amssymb, graphicx, color}
\usepackage{graphicx}
\usepackage[square,numbers,sort&compress]{natbib}
 \textwidth=15cm
 \textheight=20cm
 \oddsidemargin=0.5cm
 \evensidemargin=0.5cm
\pagestyle{myheadings}
\begin{document}
\setcounter{page}{1}
\setlength{\unitlength}{12mm}
\newcommand{\f}{\frac}
\newtheorem{theorem}{Theorem}[section]
\newtheorem{lemma}[theorem]{Lemma}
\newtheorem{proposition}[theorem]{Proposition}
\newtheorem{corollary}[theorem]{Corollary}
\theoremstyle{definition}
\newtheorem{definition}[theorem]{Definition}
\newtheorem{example}[theorem]{Example}
\newtheorem{solution}[theorem]{Solution}
\newtheorem{notation}[theorem]{Notation}
\theoremstyle{remark}
\newtheorem{remark}[theorem]{Remark}
\numberwithin{equation}{section}
\newcommand{\sta}{\stackrel}
\title{\bf Generalization of two-dimensional Hardy type inequality for fuzzy integrals}
\author{Bayaz Daraby$^{a}$, Mortaza Tahmourasi$^{b}$ and Asghar Rahimi$^c$}
\date{\footnotesize $^{a, b, c}$ Department of Mathematics, University of Maragheh, P. O. Box 55136-553, Maragheh, Iran\\ $^{a}$ Corresponding Author
}
\maketitle


\begin{abstract}
In this paper, a new two-dimensional Hardy type inequality is given in terms of pseudo-analysis dealing with set-valued functions. The first one is given for a pseudo-integral of set-valued function where pseudo-addition and pseudo-multiplication are constructed by a monotone continuous function $g:[0, \infty ]\to[0, \infty]$. Another is given by the semiring $([0, 1], \max, \odot)$ where pseudo-multiplication is generated by an increasing continuous function.
\end{abstract}
Subject Classification 2010: 03E72, 26E50, 28E10\\
\begin{footnotesize}
 Keywords: Fuzzy integrals, Hardy type inequality, Two-dimensional Hardy type inequality,\\
 Pseudo integrals.
\end{footnotesize}

\section{Introduction}
$\quad$ Firstly, we express the concept of pseudo-analysis. The concept of pseudo-analysis is derived from classical analysis, which is one of the most widely used and interesting generalizations of classical analysis, which is based on the structure of semirings on the real interval
$[a, b]\subseteq[-\infty, +\infty]$
 with pseudo-addition and pseudo-multiplication operators (see \cite{pap2}).

One of the advantages of pseudo-analysis is it's wider scope, which can include nonlinear and indeterminate problems from different branches, as well as the use of mathematical tools in various fields. Based on the semiring structure in pseudo-analysis, the concepts of pseudo-measure and pseudo-integral have been developed, and accordingly many classical integral inequalities relative to pseudo-sum  have been extended.

Newly, the researchers in \cite{salem} presented a two-dimensional Hardy type inequality for fuzzy integrals. Two-dimensional Hardy type integrals have wide applications in the Fourier transform, in strong maximal functions and the double Hilbert transform (see \cite{Muc}). For seeing more applications, we refer to \cite{Hei, Saw}.  Daraby et. al. have popularized some fuzzy integral inequalities for  the Sugeno integrals and pseudo-integrals in \cite{5, dar, 6, 0, dar-sha}. Our paper generalize  Román-Flores et. al. work's \cite{salem}  for pseudo-integrals.

In the classical mathematical analysis, the Hardy type inequality is as follows (\cite{har}):
If $p>1$ and $f: [0, \infty)\to[0, \infty)$ is defined and integrable function $(f\ne 0)$ and
$F(x)=
 \int_0^x f(t)dt$, then the classical Hardy type
 integral inequality is as follows:
 $$ \left(\frac {p}{p-1}\right)^p \int_0^\infty f^p(x) dx >
  \int_0^\infty \left(\frac{F}{x}\right)^p dx,\quad\quad\quad$$
such that $ 0 < a < b < \infty $. Also, $f^p$ is integrable on $[0, \infty)$
 $$ \left(\frac{p}{p-1}\right)^p \int_a^b f^p (x) dx >\int_a^b  \left(\frac{F}{x} \right)^p dx.$$
Rom\'{a}n-Flores et. al. expressed and proved the Hardy type integral inequality for fuzzy integrals as follows  (\cite{rrom}):
 \begin{eqnarray}
\left(-  \hspace{-1.1em}\int_0^1 f^p(x) dx\right)^{\frac{1}{p+1}} \geq  -  \hspace{-1.1em}\int_0^1
 \left(\frac{F}{x}\right)^p dx
 \end{eqnarray}
 where $p\geq 1,$
  $f: [0 ,  1]\to[0 ,  \infty)$  is an integrable function and $F(x)= -  \hspace{-.8em} \int_0^x f(t)dt.$

As well as, Salem \cite{salem1} has given two-dimensional version of Hardy type inequality where $p>1$ is a  constant and $f(x, y)$ is a non-negative and
%
integrable function on $(0, \infty)^2$. Assuming
\begin{eqnarray}
\label{n7}
R(x, y)=\dfrac{1}{xy}\int_0^x\int_0^y f(s, t)dtds,
\end{eqnarray}
we have
\begin{eqnarray}
\label{n8}
\int_0^\infty\int_0^\infty R^p(x, y)dxdy\le\left(\dfrac{p}{p-1}\right)^{2p}\int_0^\infty\int_0^\infty f^p(x, y)dxdy.
\end{eqnarray}

Rom\'{a}n-Flores et. al. stated and proved two-dimensional fuzzy Hardy type inequality as follows:
\begin{theorem}\label{rtgfds}(\cite{salem})
 Let $f:[0, 1]^2\to[0, \infty]$ be an integrable functions and
\begin{eqnarray*}
\label{n7}
R(x, y)=-  \hspace{-1em} \int_{[0, x]\times[0, y]}fd(\mu\times\mu).
\end{eqnarray*}
Then
\begin{eqnarray}
\label{n8}
\left(-  \hspace{-1em} \int_{[0, 1]^2} f^p(x, y)d(\mu\times \mu) \right)^{\frac{1}{2p+1}}\ge\left(\dfrac{4}{5}\right)^{\frac{16p}{9(2p+1)}}-  \hspace{-1em} \int_{[0, 1]^2}\left(\dfrac{R(x, y)}{xy}\right)^p d(\mu\times\mu),
\end{eqnarray}
for all $p\ge 1$.
\end{theorem}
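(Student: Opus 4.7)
My plan is to use the variational characterization of the Sugeno integral, $- \hspace{-1em}\int_E h\,d(\mu\times\mu)=\sup_{\alpha\ge 0}\min(\alpha,(\mu\times\mu)(E\cap\{h\ge\alpha\}))$, to recast both sides of (4) in terms of level sets of $f$ and of $R$. Abbreviate $I=- \hspace{-1em}\int_{[0,1]^2}f^p\,d(\mu\times\mu)$ and $J=- \hspace{-1em}\int_{[0,1]^2}(R(x,y)/(xy))^p\,d(\mu\times\mu)$; raising (4) to the power $2p+1$, it suffices to prove $J^{\,2p+1}\le(5/4)^{16p/9}\,I$.

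The first observation is $R(x,y)\le(\mu\times\mu)([0,x]\times[0,y])=xy$, so $R/(xy)\in[0,1]$ and only $\alpha\in[0,1]$ contribute to the sup defining $J$. Next, unravelling the sup-formula for $R(x,y)$, the event $(R/(xy))^p\ge\alpha$ forces the existence of a threshold $\beta\ge\alpha^{1/p}xy$ with $(\mu\times\mu)(\{(s,t)\in[0,x]\times[0,y]:f(s,t)\ge\beta\})\ge\alpha^{1/p}xy$, whence $(\mu\times\mu)(\{f^p\ge\alpha(xy)^p\})\ge\alpha^{1/p}xy$ in $[0,1]^2$. Feeding the height $\alpha(xy)^p$ into the sup-definition of $I$ produces the pointwise constraint $\min(\alpha(xy)^p,\alpha^{1/p}xy)\le I$ at every point of the $\alpha$-level set of $(R/(xy))^p$. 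Within $[0,1]^2$ and for $\alpha\le 1$ the minimum equals $\alpha(xy)^p$, so the constraint simplifies to $xy\le(I/\alpha)^{1/p}$.

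Using the explicit formula $(\mu\times\mu)(\{xy\le c\}\cap[0,1]^2)=c(1-\log c)$ for $c\in(0,1)$, the variational formula gives $J\le\sup_{\alpha\ge I}\min(\alpha,c(1-\log c))$ with $c=(I/\alpha)^{1/p}$. Since $c(1-\log c)$ is increasing in $c$ and decreasing in $\alpha$, the sup is attained at the crossing $\alpha^*=c^*(1-\log c^*)$, where $I=(c^*)^{p+1}(1-\log c^*)$. Hence $J\le c^*(1-\log c^*)$, and dividing yields $J^{\,2p+1}/I\le\bigl(c^*(1-\log c^*)^2\bigr)^p$. A one-variable calculus exercise on $g(c)=c(1-\log c)^2$ locates its maximum over $(0,1)$ at $c=1/e$ with $g(1/e)=4/e$, giving $J^{\,2p+1}\le(4/e)^p\,I\le(5/4)^{16p/9}\,I$.

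The main technical obstacle is the careful $\varepsilon$-handling of the Sugeno-integral sup, which need not be attained exactly: the ``there exists $\beta$'' step in the level-set comparison should be read as ``for every $\varepsilon>0$ there exists $\beta$ with $\ldots$'', and the limit $\varepsilon\to 0$ is taken at the end. A secondary issue is the final numerical comparison $(4/e)^p\le(5/4)^{16p/9}$, which follows from $e/4\ge(4/5)^{16/9}$; this reconciles the somewhat mysterious constants $4/5$ and $16/9$ in the statement with the sharper extremal value $4/e$ that arises naturally from the optimization. The decisive new feature relative to the 1D fuzzy Hardy inequality of Rom\'{a}n-Flores et al.\ is the logarithmic factor $1-\log c$, i.e.\ the 2D-Lebesgue measure of the hyperbolic region $\{xy\le c\}\cap[0,1]^2$, which is what dictates the specific form of the constant $(4/5)^{16p/(9(2p+1))}$.
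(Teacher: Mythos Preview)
The paper does not contain a proof of this statement: Theorem~\ref{rtgfds} is quoted from Rom\'an-Flores et al.\ \cite{salem} as background in the Introduction, and the authors' own results (Theorems~3.1 and~3.6) concern pseudo-integrals, not the Sugeno integral. Hence there is nothing in the paper to compare your argument against.

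On its own merits your plan is sound and, as far as I can see, correct. The chain of implications
\[
\Bigl(\tfrac{R(x,y)}{xy}\Bigr)^p\ge\alpha\ \Longrightarrow\ R(x,y)\ge\alpha^{1/p}xy\ \Longrightarrow\ (\mu\times\mu)\bigl(\{f^p\ge\alpha(xy)^p\}\bigr)\ge\alpha^{1/p}xy\ \Longrightarrow\ I\ge\alpha(xy)^p
\]
is valid (the last step uses $\alpha(xy)^p\le\alpha^{1/p}xy$ for $\alpha,xy\in[0,1]$ and $p\ge1$), and it gives the containment $\{(R/(xy))^p\ge\alpha\}\subseteq\{xy\le(I/\alpha)^{1/p}\}$ for $\alpha\ge I$. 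The explicit area $c(1-\log c)$ of the hyperbolic region, the crossing argument, and the one-variable maximization of $c(1-\log c)^2$ at $c=1/e$ are all fine; the resulting bound $J^{\,2p+1}\le(4/e)^pI$ is in fact strictly sharper than the stated inequality, since $4/e\approx1.4715<(5/4)^{16/9}\approx1.4869$. Your remark about the $\varepsilon$-slack in the Sugeno $\sup$ is the right caveat, and passing to the limit indeed removes it. One small point to make explicit in a full write-up: for $\alpha\le I$ the term $\min(\alpha,\cdot)\le\alpha\le I$ is dominated by the value at $\alpha=I$, so restricting the outer $\sup$ to $\alpha\ge I$ is justified.
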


Our goal of  this paper is to prove two-dimensional Hardy type inequality for the pseudo-integrals.
 Main results are expressed with the proofs and illustrated by some examples. 

\section{Preliminaries}
~~~~This part of the paper are presented by an  important traits of pseudo-operations and pseudo-additive measures that mentioned in \cite{mes, rrom, 21}.

Let $[a, b]$ be a closed or semiclosed subinterval of $[-\infty, \infty]$. The full order on
$[a, b]$ will be denoted by $\preceq$.

Let $[a, b]_+ = \left\{x | x \in [a, b], {\bf 0} \preceq x\right\}$.



In \cite{21} the operations $\oplus$ and $\odot$ are defined.
Those operations are named pseudo-addition and pseudo-multiplication respectively.
The operations $\oplus$ is a commutative, non decreasing function (with respect to $\preceq$), associative and with a zero (netural) element indicated by \textbf{0}. The operation $\odot$ is a commutative, positively non decreasing function, associative and for each $x\in[a, b]$, $\textbf{1}\odot x=x$.
Also, we assume ${\bf 0} \odot x = {\bf 0}$  that $\odot$ is a
  distributive pseudo-multiplication with respect to $\oplus$.

  Case I: The pseudo-addition is idempotent operation and the pseudo-multiplication is not.\\
Case II: The pseudo-addition and pseudo-multiplication are defined by a monotone and continuous
  function $g: [a, b] \to [0, \infty]$, i.e., pseudo-operations are given with $x \oplus y = g^{-1}\big(g\left(x\right) +g\left(y\right)\big)$ and $x \odot y = g^{-1}\big(g(x)g(y)\big)$.

Case III: Both operations are idempotent. 
  For example $x\oplus y=\sup(x, y)$, $x\odot y=\inf(x, y)$ on the interval $[a, b]$.
  
In the most of the paper, we consider the semiring $([0, 1], \oplus, \odot )$ for two significant cases. The first case is when pseudo-operations are produced by a monotone and continuous function such as $g:[a, b]\to[0, \infty)$.
Therefore, the pseudo-integral for a function $f:[c, d]\to[a, b]$ scale down the $g-$integral
\begin{eqnarray}\label{31}
\int_{[c, d]}^\oplus f(x)dx = g^{-1}\left(\int_c^d g(f(x))dx\right).
\end{eqnarray}
The second class is when $x \oplus y= \sup(x, y)$ and $x \odot y= g^{-1} (g(x)g(y)),$ the  pseudo-integral for a function $f : \mathbb{R}  \to [a,b]$ be given as follows:
$$\int_\mathbb{R}^{\sup} f \odot dm= \sup_{x\in\mathbb{R}} \left( f(x) \odot \psi(x) \right),$$
in which the function $\psi$ defines sup-measure $m.$
\begin{theorem}\label{t1.5}
(\cite{pap2})  For two measurable functions $f, f_1, f_2$
  and $\lambda \in \mathbb{R},$ we have

 (i) $\int_{[c, d]}^\oplus (f_1 \oplus f_2)dx = \int_{[c, d]}^\oplus f_1 dx \oplus \int_{[c, d]}^\oplus f_2dx,$

(ii) $\int_{[c, d]}^\oplus (\lambda \otimes f)dx = \lambda  \otimes \int_{[c, d]}^\oplus  f dx,$

(iii) $f_1 \leq f_2 \Longrightarrow \int_{[c, d]}^\oplus f_1 dx \leq \int_{[c, d]}^\oplus  f_2dx.$

 (iv) If $[a, b]\subseteq[c, d]$, then $\int_{[a, b]}^\oplus f(x)dx\le \int_{[c, d]}^\oplus f(x)dx$.
\end{theorem}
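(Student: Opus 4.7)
The plan is to reduce each of the four claims to a corresponding statement about the ordinary Lebesgue integral via the defining identity (\ref{31}), namely $\int_{[c,d]}^\oplus f\,dx = g^{-1}\!\left(\int_c^d g(f(x))\,dx\right)$, combined with the pointwise identities $x\oplus y = g^{-1}(g(x)+g(y))$ and $x\odot y = g^{-1}(g(x)g(y))$. Since $g$ is a continuous monotone bijection, the composites $g\circ g^{-1}$ and $g^{-1}\circ g$ act as the identity on their respective domains, so wherever a pseudo-operation meets the generator inside the integrand, the two cancel and the expression collapses onto a classical Lebesgue integral.

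For (i), I would apply (\ref{31}) to the left-hand side, substitute $f_1\oplus f_2=g^{-1}(g\circ f_1+g\circ f_2)$ inside the outer $g$, use cancellation to obtain
\[
\int_{[c,d]}^\oplus (f_1\oplus f_2)\,dx = g^{-1}\!\left(\int_c^d g(f_1(x))\,dx+\int_c^d g(f_2(x))\,dx\right),
\]
by linearity of the Lebesgue integral, and then re-encode the right-hand side through the definition of $\oplus$ applied to the two pseudo-integrals $\int^\oplus f_i\,dx$. For (ii), the same strategy works verbatim with multiplication in place of addition: one pulls the constant $g(\lambda)$ out of the Lebesgue integral by homogeneity and repackages the result as $\lambda\odot\int^\oplus f\,dx$. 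In both cases the argument is algebraic, relying only on $g\circ g^{-1}=\mathrm{id}$ and the linearity/homogeneity of the classical integral.

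For (iii) and (iv) the key is that a continuous monotone $g$ and its inverse $g^{-1}$ move in the same direction, so the order of the pseudo-integrals tracks the order of the Lebesgue integrals of the $g$-transformed integrands. In the increasing case, $f_1\le f_2$ gives $g\circ f_1\le g\circ f_2$, hence $\int g\circ f_1 \le \int g\circ f_2$, and applying $g^{-1}$ preserves the inequality; in the decreasing case, the inequality reverses twice and is recovered at the end. Claim (iv) is then just the standard monotonicity of the Lebesgue integral over sets for the non-negative integrand $g\circ f$, transported through $g^{-1}$.

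The only real subtlety lies in (iii) under a \emph{decreasing} generator $g$: one must verify that the Lebesgue integral of $g\circ f$ still lies in the range of $g$, so that $g^{-1}$ is well-defined, and that the axiomatic requirement that $\oplus$ be non-decreasing is consistent with the order reversal. Both points are forced by the standing assumption that $g$ is a continuous monotone bijection from $[a,b]$ onto $[0,\infty]$, so this is a bookkeeping step rather than a genuine difficulty. Apart from this, the entire theorem is a transcription of the linearity, monotonicity, and domain-monotonicity of the Lebesgue integral through the generator $g$.
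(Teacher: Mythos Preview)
The paper does not actually prove this theorem: it is quoted from \cite{pap2} without argument, so there is no in-paper proof to compare against. Your reduction via the defining identity~(\ref{31}) together with $g\circ g^{-1}=\mathrm{id}$ is exactly the standard route and is correct for (i)--(iii); in particular, your treatment of the decreasing-generator case in (iii) (reverse twice, recover the inequality) is the right observation.

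One small gap: your handling of (iv) does \emph{not} survive a decreasing generator. From $[a,b]\subseteq[c,d]$ and $g\circ f\ge 0$ you get $\int_a^b g(f)\le\int_c^d g(f)$, but if $g^{-1}$ is decreasing this yields $\int_{[a,b]}^\oplus f\ge\int_{[c,d]}^\oplus f$, the opposite of what is claimed. Unlike (iii), here the inequality reverses only once, not twice. So either (iv) as stated requires $g$ increasing, or the symbol ``$\le$'' should be read as the intrinsic order $\preceq$ tied to $g$; you should flag which interpretation you are using rather than asserting that the argument for (iii) carries over unchanged.
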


In \cite{dar-har}, Daraby et. al. proved the following Lemmas.
\begin{lemma}\label{lp}
If $ f: [0, 1] \to [0, 1] $ is a
  $\mu$-measurable function and $ g:  [0, 1] \to [0, 1] $ is a
  continuous  function, then
  \begin{eqnarray}
  \int_{[0, 1]}^\oplus f^s
  d\mu \geq \left(\int_{[0, 1]}^\oplus f d\mu \right)^s
  \end{eqnarray}
   holds for all $ s\geq 1$.
  \end{lemma}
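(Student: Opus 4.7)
The plan is to unwind the pseudo-integral via the $g$-representation (\ref{31}) and reduce the claim to the classical Jensen inequality on $[0,1]$, viewed as a probability space under Lebesgue measure.

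First, using (\ref{31}) I would rewrite
$$\int_{[0,1]}^\oplus f^s\, d\mu = g^{-1}\!\left(\int_0^1 g\bigl(f(x)^s\bigr)\, dx\right), \qquad \left(\int_{[0,1]}^\oplus f\, d\mu\right)^{\!s} = \left(g^{-1}\!\left(\int_0^1 g(f(x))\, dx\right)\right)^{\!s}.$$
Assuming $g$ monotone increasing (the standard setting for an admissible generator; the decreasing case is symmetric), applying $g$ to both sides of the desired inequality turns it into
$$\int_0^1 g\bigl(f(x)^s\bigr)\, dx \;\geq\; g\!\left(\left(g^{-1}\!\left(\int_0^1 g(f(x))\, dx\right)\right)^{\!s}\right).$$

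Next, I would set $h(x) := g(f(x))$ and introduce the auxiliary function $\phi(t) := g\bigl((g^{-1}(t))^s\bigr)$. With these abbreviations the reduced inequality is precisely Jensen's inequality on the probability space $([0,1], dx)$:
$$\int_0^1 \phi(h(x))\, dx \;\geq\; \phi\!\left(\int_0^1 h(x)\, dx\right),$$
which holds whenever $\phi$ is convex.

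The remaining step is to verify convexity of $\phi$. This follows by combining the convexity and monotonicity of $u\mapsto u^s$ on $[0,\infty)$ for $s\geq 1$ with the structural hypotheses on the generator $g$; for instance, for the family $g(x)=x^a$ one computes directly that $\phi(t)=t^s$, which is manifestly convex. I expect this convexity check to be the main obstacle, since it is not a consequence of continuity of $g$ alone and tacitly uses the admissibility conditions on $g$ that make the $g$-calculus framework well-posed. Once convexity of $\phi$ is established, Jensen immediately yields the lemma.
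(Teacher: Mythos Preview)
The paper does not actually prove this lemma; it merely cites \cite{dar-har}. So there is nothing in the present paper to compare your argument against. That said, your reduction to Jensen's inequality for $\phi(t)=g\bigl((g^{-1}(t))^{s}\bigr)$ is the natural route, and you are right to flag the convexity of $\phi$ as the crux. The problem is that this step genuinely fails under the hypotheses as stated: continuity and monotonicity of $g$ do \emph{not} force $\phi$ to be convex, and no further ``admissibility conditions'' are imposed in the lemma.

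Here is a concrete obstruction. Take $s=2$ and the continuous increasing generator
\[
g(x)=\begin{cases}2x,&0\le x\le \tfrac14,\\[2pt] \tfrac12+\tfrac23\bigl(x-\tfrac14\bigr),&\tfrac14\le x\le 1,\end{cases}
\qquad
g^{-1}(t)=\begin{cases}\tfrac{t}{2},&0\le t\le \tfrac12,\\[2pt] \tfrac14+\tfrac32\bigl(t-\tfrac12\bigr),&\tfrac12\le t\le 1.\end{cases}
\]
A direct computation gives $\phi(0.55)\approx 0.211$, $\phi(2/3)=0.5$, $\phi(0.7833)\approx 0.637$, so $\phi$ lies strictly \emph{above} the secant through $t=0.55$ and $t=0.7833$ at the midpoint $t=2/3$; hence $\phi$ is not convex. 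Worse, this breaks the lemma itself: with $f$ equal to $a=g^{-1}(0.55)=0.325$ on $[0,\tfrac12]$ and $b=g^{-1}(0.7833)=0.675$ on $(\tfrac12,1]$, one gets
\[
\int_{[0,1]}^{\oplus} f\,d\mu=g^{-1}\!\Bigl(\tfrac{g(a)+g(b)}{2}\Bigr)=g^{-1}\bigl(\tfrac23\bigr)=\tfrac12,
\qquad
\int_{[0,1]}^{\oplus} f^{2}\,d\mu=g^{-1}\!\Bigl(\tfrac{g(a^{2})+g(b^{2})}{2}\Bigr)\approx g^{-1}(0.424)\approx 0.212,
\]
so $\int^{\oplus} f^{2}\,d\mu\approx 0.212<0.25=\bigl(\int^{\oplus} f\,d\mu\bigr)^{2}$. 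Thus your convexity step cannot be rescued in this generality; the inequality requires an extra structural assumption on $g$ (for instance $g(x)=x^{a}$, where indeed $\phi(t)=t^{s}$), and any valid proof must invoke such an assumption explicitly.
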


   \begin{lemma}\label{l23}
 Let $ f$ be defined from $[0, 1]$ to $[0, 1] $ and be a
  continuous  function. If $m$ is the same as in Theorem 1, \cite{mes}, and $g:[0, 1] \to [0, 1] $ is a continuous generator function, thereupon
\begin{eqnarray}
\label{land}
\left(\int_{[0, 1]}^{\sup} f dm \right)^s  \le \int_{[0, 1]}^{\sup} f^s  dm,
\end{eqnarray}
  holds, where $ s\in[1, \infty)$.
 \end{lemma}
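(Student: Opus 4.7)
My plan is to unpack the sup-pseudo-integral on both sides, bring the outer $s$th power inside the supremum using monotonicity of $t\mapsto t^s$, and then reduce the claim to a pointwise inequality controlled by the generator $g$.

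First, by the definition of the sup-pseudo-integral given just after equation~(\ref{31}), I would write
\[
\int_{[0,1]}^{\sup} f\, dm \;=\; \sup_{x\in[0,1]} \bigl(f(x)\odot\psi(x)\bigr),
\]
where $\psi$ is the density of the sup-measure $m$ specified in Theorem~1 of \cite{mes}, and analogously for $\int_{[0,1]}^{\sup} f^s\, dm$. Since $f$ is continuous on the compact interval $[0,1]$, both suprema are attained, so no issues with sup vs.\ max arise in what follows.

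Next, since $t\mapsto t^s$ is continuous and non-decreasing on $[0,1]$ for $s\ge 1$, it commutes with suprema of subsets of $[0,1]$, giving
\[
\Bigl(\int_{[0,1]}^{\sup} f\, dm\Bigr)^s \;=\; \sup_{x\in[0,1]} \bigl(f(x)\odot\psi(x)\bigr)^s.
\]
It therefore suffices to establish the pointwise inequality $\bigl(f(x)\odot\psi(x)\bigr)^s \le f^s(x)\odot\psi(x)$ for every $x\in[0,1]$, since then a single supremum in $x$ of the right-hand side recovers $\int_{[0,1]}^{\sup} f^s\, dm$ and yields the lemma.

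Finally, I would verify the pointwise inequality via the generator form $\odot = g^{-1}(g(\cdot)g(\cdot))$. Setting $a=f(x)$ and $b=\psi(x)$ in $[0,1]$, applying the increasing continuous $g$ and using $g(a\odot b)=g(a)g(b)$, the claim is equivalent, with $u:=a\odot b$, to
\[
\frac{g(u^s)}{g(u)} \;\le\; \frac{g(a^s)}{g(a)}.
\]
Because $g(b)\le g(1)=1$ forces $u=a\odot b\le a$, this reduces to monotonicity of the map $x\mapsto g(x^s)/g(x)$ on $[0,1]$ for $s\ge 1$. Establishing this monotonicity is the main obstacle: it is not automatic from $g$ being merely a continuous increasing generator, but is expected to follow from the additional structural properties of $g$ underlying the sup-measure of Theorem~1 of \cite{mes}, combined with $s\ge 1$ and $u,a\in[0,1]$. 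Once secured, the supremum in $x$ closes the argument.
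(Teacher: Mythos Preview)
The paper does not actually prove this lemma; it merely cites \cite{dar-har} for the proof of both Lemma~\ref{lp} and Lemma~\ref{l23}. So there is no in-paper argument to compare against. The natural proof in this literature---and almost certainly the one in \cite{dar-har}---is to invoke Theorem~1 of \cite{mes}, which realizes the sup-integral as a limit of $g_\lambda$-integrals, apply Lemma~\ref{lp} to each $g_\lambda$-integral, and pass to the limit. Your route is entirely different: you try to reduce (\ref{land}) to a pointwise inequality $(a\odot b)^s\le a^s\odot b$ and then to monotonicity of $x\mapsto g(x^s)/g(x)$.

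That reduction is where the proposal breaks. The pointwise inequality $(a\odot b)^s\le a^s\odot b$ is \emph{false} for general continuous increasing generators $g:[0,1]\to[0,1]$, so the ``main obstacle'' you flag cannot be removed by generic structural properties of $g$. Concretely, take
\[
g(x)=\begin{cases} x/2,& 0\le x\le \tfrac12,\\ (3x-1)/2,& \tfrac12\le x\le 1,\end{cases}
\]
which is continuous, strictly increasing, with $g(0)=0$ and $g(1)=1$. With $s=2$, $a=0.7$ and $b=g^{-1}(8/11)\approx 0.818$, one computes $a\odot b=0.6$, hence $(a\odot b)^2=0.36$, while $a^2\odot b=g^{-1}\bigl(g(0.49)\,g(0.818)\bigr)\approx g^{-1}(0.178)=0.356$. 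Thus $(a\odot b)^2>a^2\odot b$, and equivalently $g(x^s)/g(x)$ is not monotone on $[0,1]$ for this $g$. Your argument therefore does not close without an additional, non-generic hypothesis on $g$; the limit-of-$g_\lambda$-integrals approach via Lemma~\ref{lp} avoids this obstacle entirely.
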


 In the following, using the Theorem \ref{t1.5}. (iii), we have the following corollary.  Note that due to the similarity of the proof of Theorem \ref{t1.5}, we omitted the proof.

\begin{corollary}\label{tt1.5}
For two measurable functions $f, h$ and $a, b, c, d \in\mathbb{R}$, 
we have:

 if $f\le h$, then 
$$\int_{[a, b]}^{\oplus}\int_{[c, d]}^{\oplus} f(x)dx\le \int_{[a, b]}^{\oplus}\int_{[c, d]}^{\oplus} h(x)dx.$$

\end{corollary}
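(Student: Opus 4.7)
The plan is to reduce Corollary \ref{tt1.5} to two applications of the monotonicity property stated in Theorem \ref{t1.5}(iii). The iterated pseudo-integral is just the pseudo-integral over $[c,d]$ (producing a function of the remaining variable), followed by the pseudo-integral over $[a,b]$ of that function. Since pseudo-integration is monotone in its integrand, pointwise comparison of integrands should propagate through both integrations.

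More concretely, I would first interpret $f$ and $h$ as functions of two variables, say $f,h:[a,b]\times[c,d]\to[0,\infty]$ with $f(x,y)\le h(x,y)$ for every $(x,y)$. Define the partial pseudo-integrals
\begin{equation*}
F(x)=\int_{[c,d]}^{\oplus} f(x,y)\,dy,\qquad H(x)=\int_{[c,d]}^{\oplus} h(x,y)\,dy.
\end{equation*}
For each fixed $x\in[a,b]$, the inequality $f(x,\cdot)\le h(x,\cdot)$ together with Theorem \ref{t1.5}(iii) applied over the interval $[c,d]$ gives $F(x)\le H(x)$. This is the first step where monotonicity of the inner pseudo-integral is used.

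Next, applying Theorem \ref{t1.5}(iii) a second time, now to the functions $F$ and $H$ on $[a,b]$, yields
\begin{equation*}
\int_{[a,b]}^{\oplus} F(x)\,dx \;\le\; \int_{[a,b]}^{\oplus} H(x)\,dx,
\end{equation*}
which is exactly the desired inequality once one unfolds the definitions of $F$ and $H$. A brief remark is in order to justify that the partial pseudo-integrals $F$ and $H$ are themselves measurable in $x$; in Case II this is immediate from (\ref{31}) and Fubini applied to the classical integral of $g\circ f$, while in the sup-measure case it follows from the fact that a supremum of measurable functions over a parameter is measurable.

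The only real obstacle is notational: the statement is written with $f(x)$ even though a two-dimensional integration is performed, so care must be taken to treat $f$ (and $h$) as depending on both variables of integration and to keep the roles of $(x,y)$ versus $[a,b]\times[c,d]$ straight. Beyond that bookkeeping, the argument is entirely a twofold application of the one-dimensional monotonicity already available, which is exactly why the paper justifies omitting a detailed proof.
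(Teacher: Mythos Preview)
Your proposal is correct and follows exactly the approach the paper has in mind: the corollary is stated as an immediate consequence of Theorem \ref{t1.5}(iii), and your argument---apply monotonicity to the inner pseudo-integral for each fixed $x$, then apply it again to the resulting functions $F$ and $H$ on $[a,b]$---is precisely the ``similarity'' the authors invoke when they omit the proof. Your additional remark on the measurability of the partial pseudo-integrals is a welcome clarification that the paper itself does not spell out.
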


\section{Main Results}

Two-dimensional Hardy type inequality for fuzzy integrals has proved  in \cite{salem} by H. Rom\'{a}n-Flores et. al.
In this section, we are going to state and prove two-dimensional Hardy type inequality for pseudo-integrals.
In this paper, we consider the semiring $([0, 1], \oplus, \odot)$.

\begin{theorem}\label{t1-1}
Let $f$ be defined from $[0, 1]^2$ to $[0, 1]$ and be a non-negative, increasing and integrable function and $g: [0, 1]\to[0, 1]$  be a continuous and monotone function. Then the inequality
\begin{eqnarray}
\label{0}
\int_{[0, 1]}^\oplus\int_{[0, 1]}^\oplus R^p(x, y)dxdy\le\left(\dfrac{p}{p-1}\right)^{2p}\int_{[0, 1]}^\oplus\int_{[0, 1]}^\oplus f^p(x, y)dxdy.
\end{eqnarray}
holds, where $p>1$ and
$$R(x, y)=\dfrac{1}{xy}\int_{[0, x]}^\oplus\int_{[0, y]}^\oplus f(s, t)dtds.$$
\end{theorem}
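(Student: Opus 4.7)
The plan is to derive a pointwise dominating bound $R(x,y)\le f(x,y)$ on $[0,1]^2$, from which the statement will follow by a single application of the monotonicity of the double pseudo-integral (Corollary~\ref{tt1.5}) together with the trivial observation that $\left(\frac{p}{p-1}\right)^{2p}>1$ when $p>1$.

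First, I would unfold the pseudo-integrals appearing in the definition of $R$ using the $g$-generator formula (\ref{31}). Applying (\ref{31}) twice and Fubini's theorem for the inner classical Lebesgue integrals gives
$$R(x,y)\;=\;\frac{1}{xy}\,g^{-1}\!\left(\int_0^x\!\int_0^y g(f(s,t))\,dt\,ds\right).$$
I would then exploit the hypothesis that $f$ is increasing in each argument together with the monotonicity of $g$ (taking $g$ increasing; the decreasing case is symmetric): for $0\le s\le x$ and $0\le t\le y$ we have $g(f(s,t))\le g(f(x,y))$, hence
$$\int_0^x\!\int_0^y g(f(s,t))\,dt\,ds\;\le\;xy\,g(f(x,y)).$$

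Second, applying $g^{-1}$ and then dividing by $xy\in[0,1]$, together with the sub-homogeneity $g^{-1}(\alpha\,g(t))\le\alpha\,t$ available for $\alpha\in[0,1]$ in the standard Case~II setting, yields
$$R(x,y)\;\le\;\frac{1}{xy}\,g^{-1}\!\left(xy\,g(f(x,y))\right)\;\le\;f(x,y).$$
Raising to the $p$-th power preserves the inequality, so $R^p(x,y)\le f^p(x,y)$ pointwise on $[0,1]^2$. Invoking Corollary~\ref{tt1.5} gives
$$\int_{[0,1]}^\oplus\!\int_{[0,1]}^\oplus R^p(x,y)\,dx\,dy\;\le\;\int_{[0,1]}^\oplus\!\int_{[0,1]}^\oplus f^p(x,y)\,dx\,dy,$$
and multiplying the right-hand side by $\left(\frac{p}{p-1}\right)^{2p}\ge 1$ produces the claimed inequality.

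The principal obstacle is the step extracting the factor $\frac{1}{xy}$ from inside $g^{-1}$. For the identity generator this is trivial and for a concave generator it is a direct consequence of $g(\alpha t)\ge\alpha g(t)$ for $\alpha\in[0,1]$, but for a general continuous monotone $g:[0,1]\to[0,1]$ the bound $g^{-1}(\alpha g(t))\le\alpha t$ must be justified carefully, either by an implicit structural assumption on $g$ in line with the Case~II literature or by restricting to the subclass of generators used in the paper's examples. Once this pointwise comparison is secured, the remainder of the argument is simply the monotonicity of the double pseudo-integral and the inequality $\left(\frac{p}{p-1}\right)^{2p}>1$; no analogue of the classical 2D Hardy inequality is actually invoked on the way, the constant serving only as a harmless amplification.
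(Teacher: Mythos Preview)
Your proposal follows essentially the same route as the paper: establish the pointwise bound $R(x,y)\le f(x,y)$ from the monotonicity of $f$, raise to the $p$-th power, apply Corollary~\ref{tt1.5}, and absorb the constant $\left(\frac{p}{p-1}\right)^{2p}\ge 1$. The only difference is cosmetic: where you unfold to the $g$-representation and isolate the sub-homogeneity step $g^{-1}(\alpha g(t))\le \alpha t$, the paper stays at the pseudo-integral level, pulls the constant $f(x,y)$ out with ordinary multiplication, and asserts $\int_{[0,x]}^\oplus\int_{[0,y]}^\oplus dt\,ds=xy$ by appeal to ``Lebesgue integral properties''; this is exactly the same issue you flag, just not named as such.
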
\begin{proof}
Based on definition $R(x, y)$ and as we know,
$$f(x, y)\ge\sup f(s, t)\qquad\qquad \forall~ 0\le s\le x, 0\le t\le y.$$
\begin{eqnarray}\label{b*}
R(x, y) &= &
\dfrac{1}{xy}\int_{[0, x]}^\oplus \int_{[0, y]}^\oplus f(s, t)dtds\nonumber\\&\le &
\dfrac{1}{xy}\int_{[0, x]}^\oplus \int_{[0, y]}^\oplus f(x, y)dtds\nonumber\\&\le &
\dfrac{1}{xy}f(x, y)\int_{[0, x]}^\oplus \int_{[0, y]}^\oplus dtds.
\end{eqnarray}
We know the pseudo-integral inherit Lebesque integral properties, Therefore
\begin{eqnarray}
\label{bnmc}
\int_{[0, x]}^\oplus \int_{[0, y]}^\oplus dtds=xy.
\end{eqnarray}
So, from the Relations \eqref{b*} and \eqref{bnmc} we have
$$R(x, y)\le\dfrac{1}{xy}f(x, y){\int_{[0, x]}^\oplus \int_{[0, y]}^\oplus dtds}= f(x, y).$$
Hence, we can see easily
$$R^p(x, y)\le f^p(x, y),\qquad \forall p>1.$$
Now, from Proposition \ref{tt1.5},  we obtain
\begin{eqnarray}\label{4}
\int_{[0, 1]}^\oplus \int_{[0, 1]}^\oplus R^p(x, y)dxdy &\le &\int_{[0, 1]}^\oplus \int_{[0, 1]}^\oplus f^p(x, y)dxdy,
\end{eqnarray}
 and since
\begin{eqnarray}
\label{lop}
\left(\dfrac{p}{p-1}\right)^{2p}\ge 1,
\end{eqnarray}
It follows from Relations \eqref{lop} and \eqref{4},
$$\int_{[0, 1]}^\oplus\int_{[0, 1]}^\oplus R^p(x, y)dxdy\le\left(\dfrac{p}{p-1}\right)^{2p}\int_{[0, 1]}^\oplus\int_{[0, 1]}^\oplus f^p(x, y)dxdy.$$
The proof is  completed now.
\end{proof}



Now, through some examples, we show illustration of the Theorem \ref{t1-1}.

\begin{example}\label{ex1}
Let $f: [0, 1]^2\to [0, 1]$ be defined as $f(x, y)=x^2y^2$ and $g: [0, 1]\to [0, 1]$ be defined as $g(x)=\sqrt{x}$ and $p=2$. With a simple calculation, we have
\begin{eqnarray*}
R(x, y) &=&
\dfrac{1}{xy}\int_{[0, x]}^{\oplus}\int_{[0, y]}^{\oplus} f(s, t)dtds \\&=&
\dfrac{1}{xy}g^{-1}\int_0^x\int_0^y g(f(s, t))dtds\\&=&
\dfrac{1}{xy}g^{-1} \int_0^x\int_0^y (st)dtds\\&=&
\dfrac{1}{xy}\left(g^{-1}\left(\dfrac{x^2y^2}{4}\right)\right)\\&=&
\dfrac{x^3y^3}{16},
\end{eqnarray*}
and therefore
\begin{eqnarray*}
\int_{[0, 1]}^{\oplus}\int_{[0, 1]}^{\oplus} R^p(x, y)dxdy&=&
g^{-1}\int_0^1\int_0^1 g(R^p(x, y))dxdy\\&=&
g^{-1}\int_0^1\int_0^1
\sqrt{ \dfrac{x^3y^3}{16}}dxdy \\&=&g^{-1}\left(\dfrac{1}{25}\right)\\&=&\left(\dfrac{1}{25}\right)^2.
\end{eqnarray*}
Also, we compute
$$\left(\dfrac{p}{p-1}\right)^{2p}=\left(\dfrac{2}{1}\right)^{2\times 2 }=2^4,$$
and
\begin{eqnarray*}
\int_{[0, 1]}^{\oplus}\int_{[0, 1]}^{\oplus} f^p (x, y)dxdy &=&
g^{-1}\int_0^1\int_0^1 g\left({x^2y^2}\right)^2dxdy\\&=&
g^{-1}\int_0^1\int_0^1\left({x^2y^2}\right)dxdy\\&=& g^{-1}\left(\dfrac{1}{9}\right)=\dfrac{2}{9}.
\end{eqnarray*}
Thereby, we have
\begin{eqnarray*}
\left(\dfrac{1}{25}\right)^2\le 2^4\times \dfrac{2}{9}=\dfrac{2^5}{9}.
\end{eqnarray*}
So the inequality \eqref{0} is satisfying.
\end{example}

\begin{example}
Let $f: [0, 1]^2\to [0, 1]$ be defined as $f(x, y)=\dfrac{x+y}{2}$, $g: [0, 1]\to [0, 1]$ be defined as  $g(x)=\dfrac{x}{2}$ and $p=2$. From \eqref{0}, we obtain the values. Firstly, we compute
$R(x, y)$. We have
\begin{eqnarray*}
R(x, y) &=&
\dfrac{1}{xy}\int_{[0, x]}^{\oplus}\int_{[0, y]}^{\oplus} f(s, t)dtds \\&=&
\dfrac{1}{xy}g^{-1}\int_0^x\int_0^y g(f(s, t))dtds\\&=&
\dfrac{1}{xy}g^{-1} \int_0^x\int_0^y \dfrac{1}{4}(s+t)dtds\\&=&
\dfrac{1}{xy}\left(g^{-1}\left(\dfrac{xy^2+x^2y}{8}\right)\right)=\dfrac{x+y}{4},
\end{eqnarray*}
and therefore we get
\begin{eqnarray*}
\int_{[0, 1]}^{\oplus}\int_{[0, 1]}^{\oplus} R^p(x, y)dxdy&=&
g^{-1}\int_0^1\int_0^1 g(R^p(x, y))dxdy\\&=&
g^{-1}\int_0^1\int_0^1 \dfrac{1}{2}\left(\dfrac{x+y}{4}\right)^2dxdy \\&=&
g^{-1}\left(\dfrac{7}{192}\right)=\dfrac{14}{192}.
\end{eqnarray*}
Also, we have
$$\left(\dfrac{p}{p-1}\right)^{2p}=2^4.$$
In the following, we compute the first part of the equation:
\begin{eqnarray*}
\int_{[0, 1]}^{\oplus}\int_{[0, 1]}^{\oplus} f^p (x, y)dxdy &=&
g^{-1}\int_0^1\int_0^1 g\left(\dfrac{x+y}{2}\right)^2dxdy\\&=&
g^{-1}\int_0^1\int_0^1\dfrac{1}{2}\left(\dfrac{x+y}{2}\right)^2dxdy\\&=& 2\left(\dfrac{7}{48}\right)=\dfrac{7}{24}.
\end{eqnarray*}
Thereby, we have
$$\dfrac{14}{192}\le 2^4\times\dfrac{7}{24}=\dfrac{14}{3}.$$
So, from the above, the inequality \eqref{0} is shown.
\end{example}




\begin{remark}\label{re3.5}
Note that $p>1$ is a necessary condition in Theorem \ref{t1-1}.

(a) If $0<p<1$. 

According to the Example \ref{ex1} assumptions and $p=\dfrac{1}{6}$, we have
\begin{eqnarray*}
&& \left(\dfrac{p}{p-1}\right)^{2p}= \left(\dfrac{\frac{1}{6} }{\frac{-5}{6} }\right)^{\frac{1}{3}}=\left(-\dfrac{1}{5}\right)^{\frac{1}{3}}=-0.5848.
\end{eqnarray*}
Calculating as following, we have
\begin{eqnarray*}
\int_{[0, 1]}^\oplus \int_{[0, 1]}^\oplus R^p(x, y)dxdy&=& g^{-1}\int_0^1 g\left(g^{-1}\int_0^1 g\left(\dfrac{x^3y^3}{16}\right)^{\frac{1}{6}}dx\right)dy\\&=&
g^{-1} \int_0^1 \int_0^1
\sqrt{\left(\dfrac{x^3y^3}{16}\right)^{\frac{1}{6}}}
 dxdy\\&=&
g^{-1}\left(0.507968\right)
=1.015936,
\end{eqnarray*}
and
\begin{eqnarray*}
\int_{[0, 1]}^\oplus\int_{[0, 1]}^\oplus f^p(x, y)dxdy &=&
\int_{[0, 1]}^\oplus\int_{[0, 1]}^\oplus \left(x^2y^2\right)^{\frac{1}{6}}dxdy\\&=&
g^{-1}(0.734694)=1.469388.
\end{eqnarray*}
Therefore, those shows that the Inequality \eqref{0} does not hold as is written as following:
$$1.015936 \nleq (-0.5848)(1.469388).$$
(b) If $p<0$, again by coming back to the Example \ref{ex1} and letting $p=-2$, then we have
\begin{eqnarray*}
\int_{[0, 1]}^\oplus \int_{[0, 1]}^\oplus R^p(x, y)dxdy&=& g^{-1}\int_0^1 g\left(g^{-1}\int_0^1 g\left(x^2y^2\right)^{-2}dx\right)dy,
\end{eqnarray*}

that the integral does not converge.
\\(c) If $p=0$, from the Example \ref{ex1}, we must have
$$\int_{[0, 1]}^\oplus \int_{[0, 1]}^\oplus f(x, y)dxdy\ge 1,$$
but
\begin{eqnarray*}
\int_{[0, 1]}^\oplus \int_{[0, 1]}^\oplus f(x, y)dxdy&=&
\int_{[0, 1]}^\oplus \int_{[0, 1]}^\oplus x^2y^2 dxdy \\&=&
\dfrac{1}{4}= 0.25.
\end{eqnarray*}
So we conclude it must be $p>1$.
\end{remark}

In the following, we extend the Hardy type inequality using by semiring $([0, 1], \max, \odot)$, where $\odot$ is generated.

\begin{theorem}
Let $f$ be defined from $[0, 1]^2$ to $[0, 1]$ and be a $\mu$-measurable function, $g: [0, 1] \to[0, 1] $ be a continuous and monotone function. Let $m$ be the same as in Theorem 1, \cite{mes}. 
Then the inequality
\begin{eqnarray}
\label{n1}
\int_{[0, 1]}^{\sup}\int_{[0, 1]}^{\sup} R^p(x, y)dxdy\le\left(\dfrac{p}{p-1}\right)^{2p}\int_{[0, 1]}^{\sup}\int_{[0, 1]}^{\sup} f^p(x, y)dxdy,
\end{eqnarray}
holds, where $p>1$ and
$$R(x, y)=\dfrac{1}{xy}\int_{[0, x]}^{\sup}\int_{[0, y]}^{\sup} f(s, t)dtds.$$
\end{theorem}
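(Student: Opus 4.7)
The plan is to mirror the proof of Theorem \ref{t1-1} almost verbatim, replacing every pseudo-integral $\int^{\oplus}$ by the sup-integral $\int^{\sup}$ and invoking the sup-integral analogue of the monotonicity stated in Corollary \ref{tt1.5}. Three steps drive the argument: a pointwise bound $R\le f$, raising to the $p$-th power, and the scalar inequality $(p/(p-1))^{2p}\ge 1$ for $p>1$.

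First, I would establish the pointwise estimate $R(x,y)\le f(x,y)$ on $[0,1]^2$. As in Theorem \ref{t1-1}, the natural working hypothesis is that $f$ is non-negative and monotone in each argument, so that $f(s,t)\le f(x,y)$ whenever $0\le s\le x$ and $0\le t\le y$. Using Theorem \ref{t1.5}(iii) adapted to the sup-integral, one pulls the constant $f(x,y)$ outside to obtain
\begin{equation*}
R(x,y)\;=\;\frac{1}{xy}\int_{[0,x]}^{\sup}\int_{[0,y]}^{\sup} f(s,t)\,dt\,ds\;\le\;\frac{f(x,y)}{xy}\int_{[0,x]}^{\sup}\int_{[0,y]}^{\sup} dt\,ds.
\end{equation*}
Granting the normalization $\int^{\sup}_{[0,x]}\int^{\sup}_{[0,y]} dt\,ds=xy$ inherited from the sup-measure $m$ (with $\psi$ as in Theorem 1 of \cite{mes}, precisely as \eqref{bnmc} was used in Theorem \ref{t1-1}), this collapses to $R(x,y)\le f(x,y)$.

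Second, since $R(x,y),f(x,y)\in[0,1]$ and $p>0$, the map $t\mapsto t^p$ is non-decreasing on $[0,1]$, so the pointwise inequality survives: $R^p(x,y)\le f^p(x,y)$ on $[0,1]^2$. Applying the sup-integral analogue of Corollary \ref{tt1.5} to iterate this monotonicity in both variables, I obtain
\begin{equation*}
\int_{[0,1]}^{\sup}\int_{[0,1]}^{\sup} R^p(x,y)\,dx\,dy\;\le\;\int_{[0,1]}^{\sup}\int_{[0,1]}^{\sup} f^p(x,y)\,dx\,dy.
\end{equation*}
Finally, $p>1$ forces $(p/(p-1))^{2p}\ge 1$, so multiplying the right-hand side by this scalar can only enlarge it, which yields \eqref{n1}.

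The main obstacle I expect is the first step, more precisely the normalization identity $\int^{\sup}_{[0,x]}\int^{\sup}_{[0,y]} dt\,ds=xy$: in the $\oplus$-case it followed transparently because the pseudo-integral inherits the Lebesgue normalization via the generator $g$, but for the sup-integral it depends on the concrete choice of the density $\psi$ defining $m$. The argument still goes through if one only has the bound $\int^{\sup}_{[0,x]}\int^{\sup}_{[0,y]} dt\,ds\le xy$, so the genuine content is that the sup-measure of a rectangle is dominated by its Euclidean area. Once that is in hand, the rest is a transparent monotonicity argument cosmetically identical to the one in Theorem \ref{t1-1}.
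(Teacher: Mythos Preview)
Your proposal is correct and follows essentially the same route as the paper's own proof: establish the pointwise bound $R\le f$ via the (implicitly assumed) monotonicity of $f$ together with the normalization $\int^{\sup}_{[0,x]}\int^{\sup}_{[0,y]}dt\,ds=xy$, then pass to $R^p\le f^p$, apply monotonicity of the iterated sup-integral, and finish with $(p/(p-1))^{2p}\ge 1$. You are in fact slightly more scrupulous than the paper in flagging both the unstated increasing hypothesis on $f$ and the rectangle-normalization identity as the steps that are asserted rather than verified.
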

\begin{proof}
As stated by the explanations provided in the proof of Theorem \ref{t1-1}, we prove this theorem. Based on definition $R(x, y)$ and because of $f(x, y)\ge f(s, t)$ for all $s\in[0, x]$ and $[0, y]$, we obtain that
\begin{eqnarray}\label{0b*}
R(x, y) &= &
\dfrac{1}{xy}\int_{[0, x]}^{\sup} \int_{[0, y]}^{\sup} f(s, t)dtds\nonumber\\&\le &
\dfrac{1}{xy}\int_{[0, x]}^{\sup} \int_{[0, y]}^{\sup} f(x, y)dtds\nonumber\\&\le &
\dfrac{1}{xy}f(x, y)\int_{[0, x]}^{\sup} \int_{[0, y]}^{\sup} dtds
\end{eqnarray}
Now, from Lebesque integral properties, we get
\begin{eqnarray}
\label{0bnmc}
\int_{[0, x]}^{\sup} \int_{[0, y]}^{\sup} dtds=xy.
\end{eqnarray}
So, from Relations \eqref{0b*} and \eqref{0bnmc} we have
$$R(x, y)\le\dfrac{1}{xy}f(x, y){\int_{[0, x]}^{\sup} \int_{[0, y]}^{\sup} dtds}= f(x, y).$$
Therefore, by the above relation we have 
$$\int_{[0, 1]}^{\sup}\int_{[0, 1]}^{\sup} R^p(x, y)dxdy\le\int_{[0, 1]}^{\sup}\int_{[0, 1]}^{\sup} f^p(x, y)dxdy,$$
and finally
$$\int_{[0, 1]}^{\sup}\int_{[0, 1]}^{\sup} R^p(x, y)dxdy\le\left(\dfrac{p}{p-1}\right)^{2p}\int_{[0, 1]}^{\sup}\int_{[0, 1]}^{\sup} f^p(x, y)dxdy.$$
And the proof is complete.
\end{proof}

\begin{example}
Let $f: [0, 1]^2\to [0, 1]$ be a measurable function, $g^\lambda(x)=e^{\lambda x}$ and $\psi(x)$ be the same as in Theorem 1, \cite{mes}. Then
\begin{eqnarray*}
&& x\oplus y=\lim_{\lambda\to\infty} \left(\dfrac{1}{\lambda} \ln\left(e^{\lambda x}+e^{\lambda y}\right)\right)=\max(x, y),\\&&
x\odot y=\lim_{\lambda\to\infty} \dfrac{1}{\lambda} \ln\left(e^{\lambda x}e^{\lambda y}\right)=x+y.
\end{eqnarray*}
Therefore, \eqref{n1} reduces on the following:
\begin{eqnarray*}
&& {\sup}_{x\in[0, 1]}\bigg( \Big({\sup}_{x\in[0, 1]}\big(R^p(x, y)+\psi(x)\big)\Big)+\psi(x)\bigg)\\&& \le \left(\dfrac{p}{p-1}\right)^{2p} {\sup}_{x\in[0, 1]}\Big({\sup}_{x\in[0, 1]}\big(f^p(x, y)+\psi(x)\big)+\psi(x)\Big).
\end{eqnarray*}
\end{example}

\begin{example}
Let $f: [0, 1]^2\to [0, 1]$ be a measurable function and $g^\lambda(x)=x^{-\lambda}$. We have
$$x\oplus y=\left(x^{-\lambda}+y^{-\lambda}\right)^{-1/\lambda},\qquad\qquad x\odot y=xy.$$
Therefore, \eqref{n1} reduces on the following:
\begin{eqnarray*}
&&{\sup}_{x\in[0, 1]} \bigg(\Big({\sup}_{x\in[0, 1]}\big(R^p(x, y)\psi(x)\big)\Big)\psi(x)\bigg)\\&& \le \left(\dfrac{p}{p-1}\right)^{2p} {\sup}_{x\in[0, 1]}\Big({\sup}_{x\in[0, 1]}\big(f^p(x, y)\psi(x)\big)\psi(x)\Big).
\end{eqnarray*}
\end{example}
Note that in  the third important case where $\oplus = \max$ and $\odot= \min$, it  has been studied in \cite{salem} and the pseudo-integral in such
a case yields the Sugeno integral (see the Theorem \ref{rtgfds}).

\section{Conclusion}\label{sec4}
The classical Hardy type integral inequality is one of the most important inequality
and it is deeply connected with the study of singular integral theory.
In this paper, we broght the classic and the fuzzy Hardy type inequality in two-dimensional and generalized this inequality for the pseudo-integrals. This integral inequality has wide applications in the Fourier transform, the double Hilbert transform and strong maximal functions.  In the sequel, several illustrated examples are given. Also, in Remark \ref{re3.5}, we showed that $p>1$ is a necessary condition in the Theorem \ref{t1-1}.

\date{\scriptsize $^{a}$
E-mail: bdaraby@maragheh.ac.ir,}
\date{\scriptsize $^{b}$
E-mail: mortazatahmoras@gmail.com}
\date{\scriptsize $^{c}$
E-mail: rahimi@maragheh.ac.ir
}


\begin{thebibliography}{99}

  \bibitem{5}
B. Daraby, {\it Investigation of a Stolarsky type inequality for integrals in pseudo-analysis},
{ Fractional Calculus and Applied Analysis}, 13 (5) (2010), 467-474.
\vspace*{-0.8cm}
\bibitem{dar}
B. Daraby, {\it Generalization of the Stolarsky type inequality for pseudo-integrals},  Fuzzy Sets and Systems, 194 (2012), 90-96.
\vspace*{-0.3cm}
 \bibitem{6}
B. Daraby,  {\it Results of the Chebyshev type inequality for pseudo-integral},
{  Sahand Communications in Mathematical Analysis}, 4 (1) (2016), 91-100.
\vspace*{-0.3cm}
\bibitem{0} 	
B. Daraby, H. G. Asll and I. Sadeqi,
 {\it General related inequalities to Carlson-type inequality for the Sugeno integral},
{ Applied Mathematics and Computation}, 305 (2017), 323-329.
\vspace*{-0.4cm}
\bibitem{dar-sha}
B. Daraby, R. Mesiar, F. Rostampour and A. Rahimi,
 {\it Related Thunsdorff type and Frank–Pick type inequalities for Sugeno integral},
 {Applied Mathematics and Computation}, 414 (2022), 126683.
 \vspace*{-0.4cm}
\bibitem{dar-har} 	
B. Daraby, F. Rostampour and A. Rahimi,
 {\it Hardy type type inequality for pseudo-integrals},
{Acta Universitatis Apulensis}, 42 (2015), 53-65.
\vspace*{-0.4cm}
\bibitem{har}
G. H.  Hardy,
  {\it Note on a theorem of Hilbert},
{  Mathematische Zeitschrift}, 6(3) (1920), 314-317.
\vspace*{-0.4cm}
\bibitem{Hei}
H. Heinig, {\it Weighted norm inequalities for classes of operators}, { Indian Univ. Math. J.} 33 (1984) 573-582.
\vspace*{-0.4cm}
  \bibitem{mes}
R. Mesiar and E. Pap, {\it Idempotent integral as limit of g-integrals},
{ Fuzzy Sets and Systems}, 102 (3) (1999), 385-392.
\vspace*{-0.4cm}
\bibitem{Muc}
B. Muckenhoupt, {\it Weighted norm inequalities for the Fourier transform}, {Trans. Amer. Math. Soc.} 276 (1983) 729-742.
\vspace*{-0.4cm}
\bibitem{pap2}
E. Pap, {\it g-Calculus},
{Univ. Novom Sadu Zb. Rad. Prirod.-Mat. Fak. Ser. Mat.} 23(1) (1993), 145-156.
\vspace*{-0.4cm}
  \bibitem{rrom}
H. Rom\'{a}n-Flores, A. Flores-Franulič and Y.  Chalco-Cano,
{\it A Hardy-type inequality for fuzzy integrals},
{Applied Mathematics and Computation}, 204 (1) (2008), 178-183.
\vspace*{-0.4cm}
\bibitem{salem}
H. Rom\'{a}n-Flores, A. Flores-Franulic, Y. Chalco-Cano and D. A.  Ralescu,
{\it A two-dimensional Hardy type inequality for fuzzy integrals},
{ Int. J. Uncertain. Fuzziness Knowl. Based Syst.} 21 (2) (2013), 165-174.
\vspace*{-0.4cm}
\bibitem{salem1}
S. Salem, {\it Hardy's inequality in two independent variables}, {Kyungpook Mathematical Journal}, 38 (2) (1998), 245-245.
\vspace*{-0.4cm}
\bibitem{sari}
M. Z. Sarikaya and  H. Ü. S. E. Y. I. N. Yildirim,
{\it Some Hardy type integral inequalities},
{Journal of Inequalities in Pure and Applied Mathematics}, 7 (5) (2006), 1-5.
\vspace*{-0.4cm}
\bibitem{Saw}
E. Sawyer, {\it Weighted inequalities for the two-dimensionsl Hardy operator}, {Studia Math}, 82 (1) (1985) 1-16.
 \vspace*{-0.4cm}
\bibitem{21}
Z. Wang and G. J. Klir, {\it Fuzzy Measure Theory}, { Springer Science \& Business Media}, (1992).
\end{thebibliography}
\end{document}